\DeclareMathOperator{\Inf}{Inf}
\DeclareMathOperator{\supp}{supp}
\newtheorem{thm}{\bf Theorem}[section]
\newtheorem{lemma}[thm]{\bf Lemma}
\theoremstyle{definition}
\begin{document}

\title[Estimation of function's supports under arithmetic constraints]{Estimation of function's supports under arithmetic constraints}

\author[Norbert Hegyv\'ari]{Norbert Hegyv\'ari}
 \address{Norbert Hegyv\'{a}ri, ELTE TTK,
E\"otv\"os University, Institute of Mathematics, H-1117
P\'{a}zm\'{a}ny st. 1/c, Budapest, Hungary and Alfr\'ed R\'enyi Institute of Mathematics, Hungarian Academy of Science, H-1364 Budapest, P.O.Box 127.}
\email{hegyvari@renyi.hu}

\begin{abstract}
The well-known $|supp(f)||supp(\widehat{f}|\geq |G|$ inequality gives lower estimation of each supports. In the present paper we give upper estimation under arithmetic constrains.
The main notion will be the additive energy which  plays a central role in additive combinatorics. We prove an uncertainty inequality that shows a trade-off between the total changes of the indicator function of a subset $A\subseteq \mathbb F^n_2$ and the additive energy of $A$ and the Fourier spectrum.

\bigskip

MSC 2020:11B75, 06E30, 11L99

Keywords:  Additive Combinatorics, Boolean functions, Fourier analysis, 
\end{abstract}

 \maketitle

\section{Introduction}

A classical uncertainty principle says that
\begin{equation}\label{0}
|supp(f)||supp(\widehat{f}|\geq |G|,
\end{equation}
where $G$ is a finite abelian group, $f$ acts on $G$ and $\widehat{f}$ is the Fourier transform of $f$. Roughly speaking if $|supp(f)|$ is "small', then $|supp(\widehat{f}|$ should be "big" and vice versa. In fact (\ref{0}) gives a quantitative {\it lower bound} for the other support too. But if one of the supports is "medium" size, it does not follow that the other support is "medium" size.

We can ask whether is there any other relation between the two supports? For example, is there a slowly increasing function $F$ for which $|supp(f)|\leq F(|supp(\widehat{f}|)$?  Certainly (\ref{0}) shows that we should not expect such a relationship in general. 
However, one might expect that if we assume some {\it structure} on $supp(f)$, we might expect to get such a relation. The aim of this note is to provide such an estimation. The two links that help us come from additive combinatorics and theoretical computer science.  (see section 2 and 3).

In the present paper $G$ will be $\mathbb F^n_2$,  where $\mathbb{F}_2$ is the finite field with $2$ elements. Let $A\subseteq \mathbb F^n_2$. Let $f$ be the indicator of $A$, i.e. let
$$
f(x) = \left\{
     \begin{array}{lr}
       1 &  x \in A\\
       0 &  x \notin A
     \end{array}
\right.,
$$
so $A=supp(f)$ is the support of $f$ and this type of function is said to be Boolean, since it maps from $\{0,1\}^n$ to $\{0,1\}$.

There are many other interesting uncertainty inequalities; we mention just two: the Heisenberg uncertainty principle for conjugate variables in quantum mechanics and a useful inequality in prime fields; if $f: \mathbb{Z}_p\mapsto \mathbb{Z}_p$, then  $|\supp f| + |\supp \widehat{f}|\geq p$, where $p$ is the characteristic of the prime field.

In the last decades there are several interplay between complexity theory and additive combinatorics. One of the most interesting example is connection between notions in computer sciences and the {\it Gowers norm} (see e.g. \cite{3,6}).
Another interesting example is an additive communication complexity problem which is supported by an example of Behrend on the maximal density of a set not containing three-term arithmetic progression (see e.g. \cite{2}).

\bigskip

\section{Preliminaries, Notations}

\medskip

The first concept we will discuss comes from additive combinatorics. Let $\mathbb F^n_2$ be the $n-$dimensional vector space with characteristic $2$.  For a set $A\subseteq \mathbb F^n_2$, the {\it additive energy} of $A$ is defined as the number of quadruples  $(a_1,a_2,a_3,a_4)$ for which $a_1+a_2=a_3+a_4$, formally $E(A):=|\{(a_1,a_2,a_3,a_4)\in A^4: \ a_1+a_2=a_3+a_4\}|$.
Clearly $|A|^2\ll E(A)\ll |A|^3$ holds, since the quadruple $(a_1,a_2,a_1,a_2)$ is always a solution and given $a_1,a_2,a_3$ the term $a_4$ is uniquely determined by them. (We will use the notation $|X|\ll |Y|$ to denote the estimate $|X|\leq C|Y|$ for some absolute constant $C>0$). This concept was introduced by Terence Tao, and plays a central role in additive combinatorics. (See, e.g., \cite{4,5}.)

Let $f: \{0,1\}^n \to \{0,1\}$ and $g: \{0,1\}^n \to \{0,1\}$  be two functions. The expected value of $f$ is
$$
\mathbb{E}(f):=\frac{1}{2^n}\sum_{x\in \{0,1\}^n}f(x),
$$
and the inner product of $f$ and $g$ is $\langle f,g\rangle:=\mathbb{E}(fg)$. For $S\subseteq [n]$ the corresponding  vector is $x=(x_1,x_2,\dots,x_n)\in \{0,1\}^n$ namely $x_i=1$ if $i\in S$ and $x_i=0$ otherwise. A basis function or character is defined by $\chi_x(y):=(-1)^{\langle x,y\rangle_2}$, where $\langle x,y\rangle_2:=\sum_{i=1}^nx_iy_i \pmod 2$.

For functions $f$ and $g$ their convolution is defined by $f\ast g (x):=\mathbb{E}f(y)g(x+y)$.
It is easy to verify that the convolution is associative: $f\ast (g\ast h)=(f\ast g)\ast h$.

For a set $S\subseteq [n]$ ($[n]:=\{1,2,\dots,n\}$), the Fourier transform of $f$ is $\widehat{f}(S)=\langle f,\chi_S\rangle$.

For the Fourier transform the following are true:
$$
(i)\  \langle f,g\rangle =\sum_{r\in \{0,1\}^n}\widehat{f}(r)\widehat{g}(r) \ \text{(Plancherel)}
$$
$$
(ii) \ \|f\|_2^2=\mathbb{E}(f^2)=\langle f,f\rangle = \sum_{r\in \{0,1\}^n}\widehat{f}^2(r) \ \text{(Parseval)}
$$
So by the Parseval formula for the indicator function we have 
\begin{equation}\label{Par}
\sum_{r\in \{0,1\}^n}\widehat{A}^2(r)=\frac{1}{2^n}|A|
\end{equation}.

The second link comes from computer science. The influence $Inf_i(f)$ of the $i^{th}$ variable on a Boolean function $f$ is the probability $\Inf_i(f)=\Pr_{x\in \{0,1\}^n}[f(x)\neq f(x+e_i)]$, where $x$ is uniformly distributed over $\{0,1\}^n$, and $f(x+e_i)$ means that we change the $i^{th}$ coordinate of $x$ to the opposite, (i.e. 0 to 1 or 1 to 0). The total influence of $f$ is defined to be $I(f):=\sum_i \Inf_i(f)$.

We write $supp(\widehat{f}(S))=\{S\subseteq [n]: \widehat{f}(S)\neq 0\}$.

\section{An uncertainty inequality}
\label{sec:uncert}

As in the introduction we mentioned one might expect that $|supp(f)|\leq |supp(\widehat{f}|^B$ for some $B>0$ if we assume some {\it structure} on $supp(f)$. In the next theorem, we will show that the additive energy is the hidden quantity that provides this connection between the two supports.

We prove 

\begin{thm}\label{c1} Let $A:=supp(f)$, where $f: \mathbb F^n_2 \mapsto \{0,1\}$. Assume that $E(A)=|A|^{2+\eta}/n$ ($0<\eta<1$). We have 
$$
|supp(f)|\ll|supp(\widehat{f})|^{\frac{2}{1-\eta}}
$$
if $supp(f)$ is large enough (e.g. $n/|supp(f)|\to 0$ as $n\to \infty $).
\end{thm}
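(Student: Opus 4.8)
The plan is to move everything to the Fourier side, where the arithmetic hypothesis on $E(A)$ and the Parseval identity (\ref{Par}) both become statements about the coefficients $\widehat f(S)$. First I would record the standard identity expressing the additive energy as a fourth moment of the spectrum. Writing $r_A(t):=|\{(a,b)\in A^2:a+b=t\}|$ we have $E(A)=\sum_t r_A(t)^2$, and since $(f\ast f)(t)=\mathbb{E}_y f(y)f(t+y)=2^{-n}r_A(t)$, the convolution rule $\widehat{f\ast f}(S)=\widehat f(S)^2$ combined with Parseval applied to $f\ast f$ gives
\[
E(A)=2^{3n}\sum_{S\subseteq[n]}\widehat f(S)^4 .
\]
This is the hinge of the argument: it rewrites the hypothesis $E(A)=|A|^{2+\eta}/n$ as control on the $\ell^4$ mass of $\widehat f$.

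Next I would pit the $\ell^2$ and $\ell^4$ masses against the cardinality of the spectrum. Because $\widehat f(S)=0$ for $S\notin supp(\widehat f)$, Cauchy--Schwarz yields $\big(\sum_S\widehat f(S)^2\big)^2\le |supp(\widehat f)|\sum_S\widehat f(S)^4$. Feeding in Parseval (\ref{Par}) on the left, namely $\sum_S\widehat f(S)^2=2^{-n}|A|$, and the energy identity on the right, this rearranges to
\[
|supp(\widehat f)|\ \ge\ \frac{2^n|A|^2}{E(A)} .
\]

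Finally I would substitute the hypothesis $E(A)=|A|^{2+\eta}/n$, turning the last display into $|supp(\widehat f)|\ge 2^n n\,|A|^{-\eta}$. Raising both sides to the power $2/(1-\eta)$ and using only the trivial inclusion bound $|A|\le 2^n$ to absorb the factor $2^n$ in the residual term, one checks that $\big(2^n n\,|A|^{-\eta}\big)^{2/(1-\eta)}\gg|A|$, since the exponent $(1+\eta)/(1-\eta)$ picked up by $|A|$ is strictly smaller than the exponent $2/(1-\eta)$ carried by $2^n$. This is exactly $|supp(f)|\ll|supp(\widehat f)|^{2/(1-\eta)}$; the large-support assumption $n/|supp(f)|\to0$ enters only to make the hypothesis $E(A)\ge|A|^2$ consistent and to keep the implied constant absolute.

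The only step requiring genuine care is the normalization in the energy identity: the powers of $2^n$ produced by the averaged convolution and by Parseval must be tracked exactly, because the cancellation of these $2^n$ factors against $|A|\le 2^n$ in the last step is what produces the clean exponent $2/(1-\eta)$. Once $E(A)=2^{3n}\sum_S\widehat f(S)^4$ is established, the rest is a single application of Cauchy--Schwarz followed by elementary manipulation.
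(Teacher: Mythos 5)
Your argument is correct, and it reaches Theorem \ref{c1} by a genuinely different and more elementary route than the paper. The paper deduces Theorem \ref{c1} from Theorem \ref{T3}, whose proof runs through the total influence: one truncates the Parseval sum at a radius $R$, invokes the identity $I(f)=4\sum_S|S|\widehat f^2(S)$, applies Cauchy--Schwarz together with the energy identity (\ref{2}), and optimizes over $R$; the corollary then follows from the crude bound $I(f)\le n$. You keep only the energy identity $\sum_S\widehat f^4(S)=2^{-3n}E(A)$ (which you rederive correctly, with the powers of $2^n$ tracked properly) and replace all of the influence machinery by a single Cauchy--Schwarz comparison of the $\ell^2$ and $\ell^4$ masses of the spectrum, arriving at the clean inequality $|supp(\widehat f)|\ \ge\ 2^n|A|^2/E(A)$. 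Substituting $E(A)=|A|^{2+\eta}/n$ and using $|A|\le 2^n$ in fact gives $|supp(\widehat f)|\ge n\,|A|^{1-\eta}\ge|A|^{1-\eta}$, i.e.\ the \emph{stronger} conclusion $|A|\le|supp(\widehat f)|^{1/(1-\eta)}$ with absolute constant $1$; your weaker stated target with exponent $2/(1-\eta)$ then follows a fortiori, and your exponent bookkeeping ($(1+\eta)/(1-\eta)<2/(1-\eta)$ for $\eta<1$) is sound. Note also that the largeness assumption on $supp(f)$ plays no role in your argument beyond making the hypothesis on $E(A)$ consistent (since $E(A)\ge|A|^2$ forces $|A|\ge n^{1/\eta}$). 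What your shortcut does not recover is Theorem \ref{T3} itself: the influence route produces a trade-off in which $I(f)$ appears explicitly, which is a strictly finer statement whenever $I(f)=o(n)$; but for Theorem \ref{c1} as stated only $I(f)\le n$ is ever used, and your direct comparison is both simpler and quantitatively better.
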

We will derive this theorem form our main result:
\begin{thm}\label{T3}
Let $f:\{0,1\}^n \to \{0,1\}$ and $A\subseteq \{0,1\}^n$ be its support, i.e. let $A=f^{-1}(1)$. Denote by $E(A)$ the additive energy of $A$. Then
\begin{equation}\label{1}
\frac{3|A|^3}{128E(A)}<I(f)|supp(\widehat{f})|^2.
\end{equation}
\end{thm}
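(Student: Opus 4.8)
The plan is to translate all three quantities in \eqref{1} into the Fourier spectrum of $f$ and then play an $L^2$--$L^4$ Cauchy--Schwarz estimate for the Fourier support against an elementary lower bound for the influence. Concretely, I will first record three identities. For the additive energy, writing $r(x)=|\{(a,b)\in A^2:a+b=x\}|$ one has $f\ast f(x)=2^{-n}r(x)$, so the convolution theorem $\widehat{f\ast f}=\widehat f^{\,2}$ together with Parseval $(ii)$ applied to $f\ast f$ gives $\sum_{S}\widehat f(S)^4=\|f\ast f\|_2^2=E(A)/2^{3n}$. For the influence, passing to the $\{-1,1\}$--valued function $g=1-2f$ (which has the same point-to-point disagreements as $f$) and using the standard identity $\Inf_i(g)=\sum_{S\ni i}\widehat g(S)^2$ yields $I(f)=4\sum_S|S|\,\widehat f(S)^2$. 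Finally \eqref{Par} gives $\sum_S\widehat f(S)^2=|A|/2^n$.

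The second step bounds $|supp(\widehat f)|$ from below. Applying Cauchy--Schwarz to the Parseval sum restricted to the Fourier support, $\big(\sum_S\widehat f(S)^2\big)^2\le|supp(\widehat f)|\sum_S\widehat f(S)^4$, and inserting the two identities from the first step gives $|supp(\widehat f)|\ge |A|^2 2^n/E(A)$, equivalently $|A|^3/E(A)\le (|A|/2^n)\,|supp(\widehat f)|$. This is exactly the inequality that converts the quantity $|A|^3/E(A)$ on the left of \eqref{1} into a factor of the Fourier support, so that what remains to be controlled is a single extra power of $|supp(\widehat f)|$ multiplied by $I(f)$.

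The third step is an elementary lower bound for the influence: since $|S|\ge1$ for every nonempty $S$ and the only frequency not counted is $\emptyset$, with $\widehat f(\emptyset)^2=(|A|/2^n)^2$, the identity for $I(f)$ gives $I(f)\ge 4\big(|A|/2^n-(|A|/2^n)^2\big)$. Combining the three steps, $I(f)\,|supp(\widehat f)|^2\ge 4(|A|/2^n)(1-|A|/2^n)\,|supp(\widehat f)|^2$, and feeding in the support bound reduces \eqref{1} to the elementary inequality $4(1-|A|/2^n)>3/128$, after cancelling the common factor $(|A|/2^n)\,|supp(\widehat f)|$ and using $|supp(\widehat f)|\ge1$. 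This holds with room to spare once the density $|A|/2^n$ is bounded away from $1$, which is precisely the regime of interest for an uncertainty statement.

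The main obstacle is not any single inequality but keeping the explicit constant honest through the chain and isolating the hypotheses that make the statement non-vacuous. One must assume $A$ is a proper nonempty subset, since otherwise $I(f)=0$ and the left side of \eqref{1} cannot be dominated; and the elementary last step only delivers the constant $3/128$ when $|A|/2^n$ is not too close to $1$. The genuinely dense regime must be treated separately, where the classical bound \eqref{0} forces $|supp(\widehat f)|$ to be large and the inequality again follows. Getting the factor $4$ in the influence--Fourier identity correct (it comes from the Boolean-to-$\pm1$ normalisation) and the energy--fourth-moment identity are the two places where a normalisation slip would corrupt the final constant.
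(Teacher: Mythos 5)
Your route is genuinely different from the paper's, and once one step is repaired it actually yields a stronger constant. The paper truncates the Parseval sum (\ref{Par}) at a degree threshold $R$, bounds the low-degree part by Cauchy--Schwarz against $\sum_S\widehat f^4(S)=E(A)/2^{3n}$ and the tail by $I(f)/4R$, and optimizes over $R$; cubing the optimized bound is what produces $3/128$. You use the same two spectral identities (your three identities are exactly the paper's two lemmas plus Parseval), but you spend them differently: a single Cauchy--Schwarz over the whole Fourier support gives $|supp(\widehat f)|\ge |A|^2 2^n/E(A)$, equivalently $|A|^3/E(A)\le \alpha\,|supp(\widehat f)|$ with $\alpha=|A|/2^n$, and the degree truncation is replaced by the crude bound $|S|\ge 1$, i.e.\ the Poincar\'e inequality $I(f)\ge 4\alpha(1-\alpha)$. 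All of these steps are correct, and you rightly observe that the statement needs $\emptyset\ne A\ne\{0,1\}^n$: for $f\equiv 1$ the right side of (\ref{1}) is $0$ while the left side is $3/128$, so the theorem as printed is false in that degenerate case and the paper does not address it.

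The one step that fails as written is your dense regime. When $\alpha$ is close to $1$ you invoke (\ref{0}) to claim $|supp(\widehat f)|$ is large, but (\ref{0}) applied to $f$ itself gives only $|supp(\widehat f)|\ge 2^n/|A|=1/\alpha$, which is barely above $1$ there and does not help. The repair is to apply (\ref{0}) to the complement: for $\emptyset\ne A\ne\{0,1\}^n$ the functions $f$ and $1-f$ have identical Fourier supports (their coefficients agree up to sign off $S=\emptyset$, and both means are nonzero), so $|supp(\widehat f)|\ge 2^n/|A^c|=1/(1-\alpha)$. With this single bound the case split disappears entirely: $I(f)\,|supp(\widehat f)|^2\ge 4\alpha(1-\alpha)\,|supp(\widehat f)|\cdot\frac{1}{1-\alpha}=4\alpha\,|supp(\widehat f)|\ge 4\,|A|^3/E(A)$, which is (\ref{1}) with the constant $3/128$ improved to $4$. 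So your approach, completed this way, is both more elementary than the paper's optimization argument and quantitatively sharper.
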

Theorem \ref{c1} follows now from $I(f)\leq n$.

\begin{proof}[Proof of Theorem \ref{T3}]

We consider the truncated sum of $\sum_S\widehat{f}^2(S)$ as follows: let $0<R$ be any real number chosen be later. By (\ref{Par}) we get
$$
\frac{1}{2^n}|A|=\sum_{S\subseteq [n]}\widehat{f}^2(S)\leq  \sum^{\lfloor R\rfloor}_{S\subseteq [n]; |S|=1}(1- |S|/R)\widehat{f}^2(S)+\frac{1}{R}\sum_{S\subseteq [n]}|S|\widehat{f}^2(S).
$$
Now we need the following (see e.g. \cite{1})
\begin{lemma}
For every Boolean function we have
$I(f)=4\sum_{S\subseteq [n]}|S|\widehat{f}^2(S)$
\end{lemma}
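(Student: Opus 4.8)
The plan is to route everything through the discrete derivative in the direction $e_i$ and to evaluate its $L^2$ norm in two different ways: once combinatorially (recovering the influence) and once spectrally (recovering the Fourier weight concentrated on the coordinate $i$). First I would introduce the operator $D_i f(x) := f(x) - f(x+e_i)$ and compute its Fourier expansion. The crucial point is that flipping the $i$th coordinate acts diagonally on the characters: since $\langle S, x+e_i\rangle_2 = \langle S, x\rangle_2 + \mathbf{1}_{i\in S}$, we get $\chi_S(x+e_i) = (-1)^{\mathbf{1}_{i\in S}}\chi_S(x)$. Substituting $y=x+e_i$ in the definition of the Fourier coefficient then gives $\widehat{f(\cdot+e_i)}(S) = (-1)^{\mathbf{1}_{i\in S}}\,\widehat{f}(S)$, so that $\widehat{D_i f}(S) = \bigl(1-(-1)^{\mathbf{1}_{i\in S}}\bigr)\widehat{f}(S)$, which equals $2\widehat{f}(S)$ when $i\in S$ and $0$ otherwise.

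Next I would apply the Parseval identity (item (ii) of Section~2) to $D_i f$, which yields $\|D_i f\|_2^2 = \sum_{S}\widehat{D_i f}^2(S) = 4\sum_{S\ni i}\widehat{f}^2(S)$. The other half of the argument identifies this same norm with the influence. Because $f$ is Boolean, $f(x)-f(x+e_i)\in\{-1,0,1\}$, and its square is precisely the indicator of the event $f(x)\neq f(x+e_i)$; hence $\|D_i f\|_2^2 = \mathbb{E}_x\bigl[(f(x)-f(x+e_i))^2\bigr] = \Pr_x[f(x)\neq f(x+e_i)] = \Inf_i(f)$. Combining the two evaluations of $\|D_i f\|_2^2$ gives the per-coordinate identity $\Inf_i(f) = 4\sum_{S\ni i}\widehat{f}^2(S)$.

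Finally I would sum over $i\in[n]$ and interchange the order of summation: $I(f) = \sum_i \Inf_i(f) = 4\sum_i\sum_{S\ni i}\widehat{f}^2(S) = 4\sum_{S\subseteq[n]}|S|\,\widehat{f}^2(S)$, where the multiplicity $|S|$ appears because a fixed set $S$ is counted once for each coordinate $i$ it contains. This is exactly the claimed formula.

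I do not expect a genuine conceptual obstacle here; the one place that must be handled with care is the bookkeeping of constants. The factor $2$ in $\widehat{D_i f}(S)$ becomes a $4$ upon squaring in Parseval, and the reduction $\|D_i f\|_2^2=\Inf_i(f)$ relies \emph{essentially} on $f$ being $\{0,1\}$-valued: under the alternative $\pm1$ encoding the squared difference would equal $4$ times the disagreement indicator, which would alter the final constant. So the main thing to verify is that all normalizations are consistent with the conventions fixed in Section~2, after which the factor $4$ is forced.
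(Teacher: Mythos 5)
Your proposal is correct and follows essentially the same route as the paper: both compute the Fourier expansion of the discrete derivative $f(x)-f(x+e_i)$ (the coefficient doubles when $i\in S$ and vanishes otherwise), apply Parseval to get $\Inf_i(f)=4\sum_{S\ni i}\widehat{f}^2(S)$, and then sum over $i$, with the multiplicity $|S|$ arising from the interchange of summation. Your explicit justification that $(f(x)-f(x+e_i))^2$ equals the disagreement indicator because $f$ is $\{0,1\}$-valued is a welcome clarification of a step the paper only asserts.
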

\begin{proof}
Recall that $Inf_i(f)= \mathbb{E}_x[(f(x)- f((x+e_i))^2]$. 
Take now the Fourier representations of $f(x)$ and  $f(x+e_i)$ we have $|f(x)- f(x+e_i)|=2\sum_{i\in S}\widehat{f}(S)(-1)^{\langle S,x\rangle}$. Indeed, we have two cases; if $i\not \in S$, a term in $f(x)$ cancels the term in $f(x+e_i)$. If $i \in S$ it will be the double. Expanding the expectation we have
$$
Inf_i(f)= \mathbb{E}_x[f(x)- f((x+e_i))^2]=\langle (f(x)- f(x+e_i),f(x)- f(x+e_i)\rangle=
$$
$$
=4\langle \sum_{i\in S}\widehat{f}(S)\chi_S,\sum_{i\in T}\widehat{f}(T)\chi_T\rangle=4\sum_{i\in S}|\widehat{f}(S)|^2.
$$
Finally
$$
I(f)=\sum_iI_i(f)=4\sum_i\sum_{i\in S}|\widehat{f}(S)|^2=
$$
$$
=4\sum_S\sum_{i}|\widehat{f}(S)|^2=4\sum_{S}|S||\widehat{f}(S)|^2.
$$
\end{proof}

So we have
$$
\frac{1}{2^n}|A|\leq \sum_{S\in supp(\widehat{f})}\sum^{\lfloor R\rfloor}_{|S|=1}(1- |S|/R)\widehat{f}^2(S)+\frac{1}{4R} I(f)\leq |supp(\widehat{f})|\sum^{\lfloor R\rfloor}_{|S|=1}(1- |S|/R)\widehat{f}^2(S)+\frac{1}{4R} I(f),
$$
 Now let us use the Cauchy inequality for $\sum^{\lfloor R\rfloor}_{|S|=1}(1- |S|/R)\widehat{f}^2(S)$. We obtain
$$
\sum^{\lfloor R\rfloor}_{|S|=1}(1- |S|/R)\widehat{f}^2(S)\leq \sqrt{\sum^{\lfloor R\rfloor}_{|S|=1}(1- |S|/R)^2}\sqrt{\sum_{S\subseteq [n]}\widehat{f}^4(S)}.
$$
Now we state a well-known lemma (see e.g. in \cite{1}):
\begin{lemma}
\begin{equation}\label{2}
\frac{1}{2^{3n}}E(A)=\mathbb{E}_{x,y,z}(f(x)f(y)f(z)f(x+y+z))=\sum_r\widehat{f}^4(r).
\end{equation}
\end{lemma}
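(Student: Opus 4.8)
The plan is to prove the two equalities separately, since the left identity is purely combinatorial while the right one is Fourier-analytic, and they happen to share the same middle quantity.

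First I would establish the combinatorial identity $\mathbb{E}_{x,y,z}(f(x)f(y)f(z)f(x+y+z)) = \frac{1}{2^{3n}}E(A)$. Writing the expectation as $\frac{1}{2^{3n}}\sum_{x,y,z}f(x)f(y)f(z)f(x+y+z)$, the summand equals $1$ exactly when $x,y,z\in A$ and $x+y+z\in A$, and $0$ otherwise. Setting $w:=x+y+z$, the triples $(x,y,z)\in A^3$ with $x+y+z\in A$ are in bijection with the quadruples $(x,y,z,w)\in A^4$ satisfying $x+y+z+w=0$. The arithmetic point on which the characteristic-$2$ assumption is used crucially is that in $\mathbb F^n_2$ one has $x+y+z+w=0 \iff x+y=z+w$; hence the count equals $|\{(a_1,a_2,a_3,a_4)\in A^4:\ a_1+a_2=a_3+a_4\}| = E(A)$, and dividing by $2^{3n}$ gives the claim.

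Second I would prove the Fourier identity $\mathbb{E}_{x,y,z}(f(x)f(y)f(z)f(x+y+z)) = \sum_r \widehat f^4(r)$. My preferred route is to recognize the triple average as an inner product with an iterated convolution. Using the definition $f\ast g(x)=\mathbb E_y f(y)g(x+y)$ and associativity (both recorded in Section 2), the substitution $w=y+z$ gives $(f\ast f\ast f)(x)=\mathbb E_{y,z}(f(y)f(z)f(x+y+z))$, and therefore $\mathbb E_{x,y,z}(f(x)f(y)f(z)f(x+y+z)) = \langle f,\, f\ast f\ast f\rangle$. Applying Plancherel (i) together with the convolution rule $\widehat{f\ast g}(r)=\widehat f(r)\widehat g(r)$ yields $\langle f,\, f\ast f\ast f\rangle = \sum_r \widehat f(r)\widehat f(r)^3 = \sum_r \widehat f^4(r)$. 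Alternatively, one can expand all four occurrences of $f$ by Fourier inversion, use the multiplicativity $\chi_r(x+y+z)=\chi_r(x)\chi_r(y)\chi_r(z)$, and collapse the resulting quadruple sum by orthogonality of characters; this directly forces the four frequency labels to coincide and produces the same $\sum_r\widehat f^4(r)$.

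The routine calculations are genuinely routine, so the main obstacle is bookkeeping rather than depth. I must check that the convolution rule $\widehat{f\ast g}=\widehat f\,\widehat g$ holds with the normalizations and the pairing $\langle\cdot,\cdot\rangle=\mathbb E(\cdot)$ used here; it does, and it follows from the very same change of variables $u=x+y$ that proves associativity. I must also keep careful track of the factor $1/2^{3n}$ coming from the three independent uniform averages versus the unnormalized count defining $E(A)$. The only substantive point is the characteristic-$2$ identity $x+y+z+w=0 \iff x+y=z+w$, which is exactly what makes the triple convolution count the additive energy rather than some twisted variant.
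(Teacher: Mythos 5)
Your proposal is correct and follows essentially the same route as the paper: the first equality via the substitution $w=x+y+z$ and the characteristic-$2$ identification with the energy count, and the second by writing the average as $\langle f, f\ast f\ast f\rangle$ and applying Plancherel with the convolution rule. Your explicit check that $\widehat{f\ast g}=\widehat f\,\widehat g$ holds under the paper's normalization is a welcome bit of care that the paper leaves implicit.
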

For sake of completeness we give the short proof.

\begin{proof}
First note that in $\mathbb{F}_2$, $x+(x+y+z)=y+z$ holds, thus we have
$$
\mathbb{E}_{x,y,z}(f(x)f(y)f(z)f(x+y+z))=
$$
$$
=\frac{1}{2^{3n}}|\{(a_1,a_2,a_3,a_4)\in A^4: \ a_1+a_2=a_3+a_4\}|=\frac{1}{2^{3n}}E(A).
$$
Furthermore using that $\mathbb{E}_z(f(z)f(x+y+z))=f\ast f(x+y)$, we have
$$
\mathbb{E}_{x,y,z}(f(x)f(y)f(z)f(x+y+z))=\mathbb{E}_x(f(x)\mathbb{E}_y(f(y)\mathbb{E}_z(f(z)f(x+y+z))))=
$$
$$
=\mathbb{E}_x(f(x)\mathbb{E}_y(f(y)f\ast f(x+y)))=\mathbb{E}_x(f(x)(f\ast(f\ast f(x)))).
$$
Write briefly $f_3\ast (r)$ instead of $(f\ast (f\ast f))(r)$. By the Plancherel formula, the associative of the convolution, and the Fourier transformation of a convolution we have
$$
\mathbb{E}_{x,y,z}(f(x)f(y)f(z)f(x+y+z))=\sum_r[\widehat{f\cdot f_3\ast (r)}]=
$$
$$
=\sum_r\widehat{f}(r)\widehat{f_3\ast (r)}=\sum_r\widehat{f}(r)\widehat{f}^3(r)=\sum_r\widehat{f}^4(r).
$$
Let $A:=f^{-1}(1)$. Then $\sum_r\widehat{f}^4(r)=\frac{1}{2^{3n}}E(A)$. As we detected $\sum_r\widehat{f}^4(r)$ can be written as $\mathbb{E}_{x,y,z}(f(x)f(y)f(z)f(x+y+z))$ and we are done.

\end{proof}

Expanding the sum $\sum^{\lfloor R\rfloor}_{|S|=1}(1-|S|/R)^2$ and by the elementary summation we have
$$
\sum^{\lfloor R\rfloor}_{|S|=1}(1-|S|/R)^2\leq \frac{R}{3}
$$
Summing up
\begin{equation}\label{3}
\frac{1}{2^n}|A|=\sum_{S\subseteq [n]}\widehat{f}^2(S)\leq |supp(\widehat{f})|\sqrt{\frac{R}{3}}\sqrt{\frac{1}{2^{3n}}E(A)}+\frac{1}{4R} I(f).
\end{equation}
The first term is an increasing, the second term is an decreasing function of $R$. So when the two terms are the same, i.e. when $|supp(\widehat{f})|\sqrt{\frac{R}{3}}\sqrt{\frac{1}{2^{3n}}E(A)}=\frac{1}{4R} I(f)$ (and where the bound is optimal), we get for $R$:
$$
R=\sqrt[3]{\frac{3\cdot{2^{3n}}I^2(f)}{16E(A)|supp(\widehat{f})|^2}}.
$$
Thus by (\ref{3}) we have
$$
\frac{1}{2^n}|A|\leq 2\frac{1}{R}I(f)=2\sqrt[3]{\frac{16E(A)|supp(\widehat{f})|^2}{3\cdot{2^{3n}}I^2(f)}}I(f).
$$
Taking the cube we obtain our result.
\end{proof}

\medskip

\noindent{\bf Acknowledgement.} This work was supported by the National Research, Development and Innovation Fund of Hungary through project no. grant K-129335.

\medskip

\

\smallskip

\end{document}